\def\f12{\frac 1 2}
\def\ga{\gamma}
\def\ep{\epsilon}
\def\om{\omega}
\def\H{\mathcal{H}} 
\def\S{\mathcal{S}} 
\def\Lb{\underline{L}}
\def\pa{\partial}
\def\les{\lesssim}
\def\B{\mathcal{B}}
\def\cL{{\mathcal L}}
\def\f12{\frac 1 2}
\newcommand{\vol}{\textnormal{vol}}
\newcommand{\nabb}{\mbox{$\nabla \mkern-13mu /$\,}}
\newtheorem{Thm}{Theorem}[section]
\newtheorem{Prop}{Proposition}[section]
\newtheorem{Remark}{Remark}[section]
\theoremstyle{definition}
\begin{document}

\title{On the defocusing semilinear wave equations in three space dimension with small power}
\date{}
\author{Dongyi Wei \and Shiwu Yang}
\maketitle
\begin{abstract}
By introducing new weighted vector fields as multipliers, we derive quantitative pointwise estimates for solutions of defocusing semilinear wave equation in $\mathbb{R}^{1+3}$ with pure power nonlinearity for all $1<p\leq 2$. Consequently, the solution vanishes on the future null infinity and decays in time polynomially for all $\sqrt{2}<p\leq 2$. This improves the uniform boundedness result of the second author when $\frac{3}{2}<p\leq 2$.  
\end{abstract}

\section{Introduction}
In this paper, we continue our study on the global pointwise behaviors for solutions to the energy sub-critical defocusing semilinear wave equations
\begin{equation}
  \label{eq:NLW:semi:3d}
  \Box\phi=|\phi|^{p-1}\phi,\quad \phi(0, x)=\phi_0(x),\quad \pa_t\phi(0, x)=\phi_1(x)
\end{equation}
in $\mathbb{R}^{1+3}$ with small power $1<p\leq 2$.

The existence of global solutions in energy space is well known since the work  \cite{velo85:global:sol:NLW} of Ginibre-Velo for the energy sub-critical case $1<p<5$. The long time dynamics of these global solutions concerns mainly two types of questions: The first type is the problem of scattering, namely comparing the nonlinear solutions with linear solutions as time goes to infinity. A natural choice for the linear solution is the associated linear wave. Since linear wave decays in time in space dimension $d\geq 2$, the nonlinear solution approaches to linear wave in certain sense for sufficiently large power $p$. This was shown by Strauss in \cite{Strauss:NLW:decay}  for the super-conformal case $3\leq p<5$ in space dimension three. Extensions could be found for example in \cite{Baez:3DNLW:Groursat}, \cite{Velo87:decay:NLW}, \cite{Hidano:scattering:NLW},  \cite{Pecher82:decay:3d}, \cite{yang:scattering:NLW}.
The latest work \cite{yang:NLW:ptdecay:3D} of the second author shows that the solution scatters to linear wave in energy space for $2.3542<p<5$ in $\mathbb{R}^{1+3}$.
 However the precise asymptotics of the solutions remains unclear for small power $p$. One of the difficulties is that the equation degenerates to linear Klein-Gordon equation when $p$ approaches to the end point $1$.

Another question is to investigate the asymptotic behaviors of the solutions in the pointwise sense, which is plausible in lower dimensions $d\leq 3$. The obstruction in higher dimension is that taking sufficiently many derivatives for energy sub-critical equations is not possible for general $p$. Even for energy critical equations, the global regularity result only holds in space dimension $d\leq 9$ (see for example \cite{Kapitanski94:NLW:n9:cri}). The energy super-critical case remains completely open (see recent breakthrough in \cite{Igor20:blow:NLS} for the blowing up result for the defocusing energy super-critical nonlinear Schr\"{o}dinger equations). The scattering result of Strauss is based on the time decay $t^{\ep-1}$ of the solution, which has been improved in \cite{Bieli:3DNLW} by using conformal compactification method. However this method only works for super-conformal case when $p\geq 3$.
To study the asymptotic behaviors of the solutions with sub-conformal power $p<3$, Pecher in \cite{Pecher82:decay:3d}, \cite{Pecher82:NLW:2d} observed that the potential energy decays in time with a weaker decay rate (comparing to $t^{-2}$ for the super-conformal case). This allows him to obtain polynomial decay in time of the solutions when $p>\frac{1+\sqrt{d^2+4}}{2}$ in space dimension $d=2$ and $3$.

However, Pecher's observation was based on the conformal symmetry of Minkowski spacetime, that is, the time decay of the potential energy is derived by using the conformal Killing vector field as multiplier. As we have seen, the smaller power $p$ leads to the slower decay of the nonlinearity, hence making the analysis more difficult. More precisely, the power $p$ is closely related to the weights in the multipliers. For the super-conformal case $p\geq \frac{d+3}{d-1}$, one can use the conformal Killing vector field with weights $t^2$, which leads to the time decay $t^{-2}$ of the potential energy. For the end point case $p=1$, so far as we know, there is no similar weighted energy estimates for linear Klein-Gordon equations, without appealing to higher order derivatives. This suggests that it is important to use multipliers with proper weights depending on the power $p$ in order to reveal the asymptotic behaviors of the solutions. As the power $p$ varies continuously, it in particular calls for a family of weighted vector fields which are  consistent with the structure of the equations. 
 
The robust new vector field method originally introduced by Dafermos and Rodnianski in \cite{newapp} provides such family of multipliers $r^{\gamma}(\pa_t+\pa_r)$ with $0\leq \ga\leq 2$. Combined with the well known integrated local energy estimates (see for example \cite{mora1}), a pigeon-hole argument leads to the improved time decay of the potential energy. This enables the second author in  \cite{yang:NLW:ptdecay:3D} to  show that the solution decays at least $t^{-\frac{1}{3}}$ for  $2<p<5$ in three space dimension. The lower bound $p>2$  arises due to the fact that the pigeon-hole argument works only for $\gamma>1$. However the multipliers can be used for all $0\leq \ga\leq 2$ and hence uniform boundedness (or spatial decay) of the solution holds for $\frac{3}{2}<p\leq 2$. We see that there is a gap regarding the time decay of the solutions between the cases $p>2$ and $p\leq 2$. Moreover this method fails in lower dimensions $d\leq 2$.

The philosophy that suitable weighted multiplier yields the time decay of the potential energy inspires us to introduce new non-spherically  symmetric weighted vector fields as multipliers in \cite{yang:NLW:1D:p},\cite{yang:NLW:2D} to show the polynomial decay in time of the solution for all $p>1$ in space dimension one and two. This in particular extends the result of Lindblad and Tao in \cite{tao12:1d:NLW}, where an averaged decay of the solution was shown for the defocusing semilinear wave equation in $\mathbb{R}^{1+1}$.
 
The aim of this paper is to investigate the asymptotic behaviors of the solutions in three space dimension with small power $p\leq 2$. Again, the essential idea is to introduce some new weighted vector fields as multipliers, which are partially inspired by our previous work \cite{yang:NLW:2D} in space dimension two. This allows us to obtain potential energy decay for all $1<p<5$ and time decay of the solutions for all $p>\sqrt{2}$, and hence filling the gap left in \cite{yang:NLW:ptdecay:3D}.

To state our main theorem, for some constant $\ga$ and integer $k$, define the weighted energy norm of the initial data
\begin{align*}
  \mathcal{E}_{k,\ga} =\sum\limits_{l\leq k}\int_{\mathbb{R}^3}(1+|x|)^{\ga+2l}(|\nabla^{l+1}\phi_0|^2+|\nabla^l \phi_1|^2)+(1+|x|)^{\ga}|\phi_0|^{p+1}dx.
\end{align*}
We prove in this paper that
\begin{Thm}
\label{thm:main}
Consider the defocusing semilinear wave equation \eqref{eq:NLW:semi:3d} with initial data $(\phi_0, \phi_1)$ such that $\mathcal{E}_{1, 2}$
is finite. Then for all $1<p< 2$,
the solution $\phi$ to the equation \eqref{eq:NLW:semi:3d} exists globally in time and verifies the following asymptotic pointwise estimates
\begin{equation*}
|\phi(t, x)|\leq 
\begin{cases}
C(\sqrt{\mathcal{E}_{1, 2} }+\mathcal{E}_{0, 2}^{\frac{p}{p+1}})(1+|t|+|x|)^{-\frac{p-1}{p+1}}(1+|x|-|t|)^{-\frac{p-1}{p+1}},\quad |x|\geq |t|;\\
C(\sqrt{\mathcal{E}_{1, 2} }+\mathcal{E}_{0, 2}^{\frac{p}{p+1}})(1+|t|+|x|)^{-\frac{(p-1)^2}{p+1}}(1+|t|-|x|)^{\frac{3-2p}{p+1}}
  ,\quad |x|\leq |t|
\end{cases}
\end{equation*}
for some constant $C$ depending only on $p$. For the quadratic nonlinearity with $p=2$, it holds that  
\begin{align*}
  &|\phi(t,x)|\leq C_{\epsilon} (\sqrt{\mathcal{E}_{1, 2} }+\mathcal{E}_{0, 2}^{\frac{2-\ep }{3(1-\ep) }})(1+|t|+|x|)^{-\frac{1 -2\ep}{3}}(1+||x|-|t||)^{-\frac{1-2\ep }{3}}
\end{align*}
for all $0<\ep<\frac{1}{2}$ with constant $C_{\ep}$ depending only on  $\ep$.
 \end{Thm}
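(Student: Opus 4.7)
The strategy is to obtain weighted integrated energy decay via new multipliers and then convert these to pointwise bounds using a Kirchhoff-type representation. First I would combine the standard energy identity, the conformal energy identity, and the Dafermos–Rodnianski hierarchy with multipliers $r^\gamma(\partial_t+\partial_r)$, $0\leq\gamma\leq 2$, together with the integrated local energy estimate, to propagate uniform boundedness of the weighted energies under the assumption $\mathcal{E}_{1,2}<\infty$. The nonlinear error terms are absorbed either by the positive potential-energy bulk or by the good divergence term, and the $r$-weighted flux bounds through outgoing null cones give the exterior spatial factors $(1+|x|-|t|)^{-(p-1)/(p+1)}$ via a weighted Sobolev-type embedding on the cones.

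The key new ingredient, inspired by \cite{yang:NLW:2D}, is a family of non-spherically symmetric multipliers of the schematic form $X=f_1(t,r)\partial_t+f_2(t,r,\omega)\,\omega^i\partial_i$ with weights depending explicitly on $p$. The deformation tensor is engineered so that its contraction with the energy-momentum tensor produces a bulk integrand dominating $c_p\,(t+r)^{a(p)}(t-r)^{b(p)}|\phi|^{p+1}$, with the angular weight chosen so that the spherical cross-terms have a definite sign after integration on $\mathbb{S}^2$. Applying this identity on spacetime slabs and running a pigeon-hole argument over a dyadic sequence of times yields polynomial time decay of $\int_{\Sigma_t}|\phi|^{p+1}\,dx$ for all $1<p<5$, together with a refined decay of the corresponding $r$-weighted flux. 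The endpoint $p=2$ produces a logarithmic loss that is absorbed into the parameter $\epsilon$, which explains why the quadratic bound must be stated with an arbitrarily small $\epsilon\in(0,1/2)$.

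To extract the pointwise bounds, I would represent $\phi$ through Duhamel's principle and Kirchhoff's formula, writing $\phi(t,x)$ as the linear part on $\{|y-x|=t\}$ plus a spacetime integral of $|\phi|^{p-1}\phi$ over the backwards truncated null cone from $(t,x)$. Splitting this cone into its portions in $\{|y|\geq|s|\}$ and $\{|y|\leq|s|\}$, inserting the newly obtained $L^{p+1}$ time decay together with the $r$-weighted flux bounds, and applying H\"older's inequality separately in each region produces the exterior and interior asymptotics stated in Theorem~\ref{thm:main}. I expect the main obstacle to be the construction in the second step: finding explicit $p$-dependent weights whose deformation tensor has a favourable sign for every $p\in(1,2]$ with constants stable as $p\to 1^+$, and which remain compatible with the $r^\gamma$ hierarchy needed for the exterior decay. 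A secondary difficulty is the pointwise recovery near the light cone $\{|x|=|t|\}$, where the Kirchhoff kernel carries a factor of $|x-y|^{-1}$ and the nonlinear source must be carefully decomposed into near-cone and bulk contributions before the new decay rates can be applied.
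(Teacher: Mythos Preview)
Your plan has a genuine gap at the heart of the argument. You propose to combine the Dafermos--Rodnianski $r^\gamma$ hierarchy with the integrated local energy estimate and then run a pigeon-hole argument over dyadic times to extract polynomial decay of $\int_{\Sigma_t}|\phi|^{p+1}\,dx$. But this is precisely the mechanism that breaks down for $p\leq 2$: the pigeon-hole step requires the weight exponent $\gamma>1$, which in turn forces $p>2$ (since the $r$-weighted estimate is only available for $0<\gamma<p-1$). The paper states this obstruction explicitly in the introduction, and the whole point of the present work is to bypass it. So the first and third steps of your plan would not, as written, produce any time decay in the range $1<p\leq 2$.

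The paper's route is rather different in two respects. First, the non-spherically symmetric multiplier is not of the form $f_1(t,r)\partial_t+f_2(t,r,\omega)\,\omega^i\partial_i$; it is built out of the \emph{planar} null coordinate $u=t-x_1$ and the null pair $L_1=\partial_t+\partial_1$, $\underline{L}_1=\partial_t-\partial_1$, with weight $f(u)=(1+u^2)^{(p-3)/2}$ for $u\geq 0$. The deformation tensor then has a sign thanks to the algebraic identity $(p-3)uf(u)\geq (u^2+1)f'(u)$, which is exactly tailored to $1<p\leq 2$. Crucially, this multiplier is applied not on spacetime slabs but on the solid backward cone $\mathcal{J}^-(q)$ from a fixed point $q=(t_0,r_0,0,0)$, and the output is a weighted $L^{p+1}$ bound for $\phi$ directly on the \emph{backward null cone} $\mathcal{N}^-(q)$ with weight $|t_0-r_0|^{p-1}(1-\tilde\omega_1)+1$. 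The reflection $x_1\mapsto -x_1$ then supplies the companion weight $|t_0+r_0|^{p-1}(1+\tilde\omega_1)$. A second, separate estimate for the exterior region uses the Lorentz boost $x_1\partial_t+t\partial_1$ (not the $r^\gamma$ hierarchy) on $\mathcal{J}^-(q)\cap\{x_1\geq t\}$, giving a weight $r_0+t_0\tilde\omega_1$ on $\mathcal{N}^-(q)\cap\{x_1\geq t\}$.

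Second, there is no intermediate step of proving $L^{p+1}$ decay on constant-$t$ slices and no pigeon-hole. The weighted flux bounds on $\mathcal{N}^-(q)$ are inserted \emph{directly} into the Kirchhoff integral $\int_{\mathcal{N}^-(q)}|\phi|^{p-1}\phi\,\tilde r\,d\tilde r\,d\tilde\omega$ via H\"older, and the angular integral of the resulting weight $((1+\tilde\omega_1)A+B)^{-\gamma}$ is computed explicitly. The interior estimate requires combining the two multiplier bounds into a single weight $(1+\tilde\omega_1)(v_0^{p-1}+\tilde r\,u_0^{p-2})+u_0^{p-1}$ before applying H\"older. The $p=2$ case is handled by sacrificing an $M^\epsilon$ factor (with $M=\sup|\phi|$) inside the H\"older step and closing a bootstrap, rather than by a logarithmic loss in a pigeon-hole sum. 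Your instinct about the representation formula and about needing $p$-dependent multiplier weights is correct, but the specific machinery you describe (the $r^\gamma$ flux, the ILED, dyadic pigeon-hole, slab-based bulk positivity) is the toolkit for $p>2$ and must be replaced by the planar-multiplier/backward-cone argument above.
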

We give several remarks.

\begin{Remark} 
The theorem implies that the solution decays along out going null curves ($|t|-|x|$ is constant) for all $1<p\leq 5$ (see \cite{yang:NLW:ptdecay:3D} for the case when $2<p<5$ and \cite{Grillakis:NLW:cri:3d} for the energy critical case). In other words, the solution vanishes on the future (and past) null infinity and  blowing up can only occur at time infinity. It will be of great interest to see whether such blow up can happen particularly for $p$ close to $1$.
\end{Remark}

\begin{Remark} 
\label{remark2}
  In view of the energy conservation, one can easily conclude that the solution grows at most polynomially in time $t$ with rate relying on the power $p$.
  The theorem improves this growth for $1<p\leq \sqrt{2}$ and shows that the solution decays inverse polynomially in time for $\sqrt{2}<p\leq 2$.  In particular, it fills the gap left in \cite{yang:NLW:ptdecay:3D} by the second author, in which only uniform boundedness of the solution was obtained for $\frac{3}{2}<p\leq 2$ while time decay with rate at least $t^{-\frac{1}{3}}$ was shown for $2<p<3$.
\end{Remark}

\begin{Remark} 
 The proof also indicates that the potential energy decays in time for all $1<p\leq 3$, that is,
 \begin{align*}
 \int_{\mathbb{R}^3} |\phi(t, x)|^{p+1}dx\leq C \mathcal{E}_{2, 0}(1+t)^{ 1-p}
 \end{align*}
 with some constant $C$ depending only on $p$. This time decay estimate is stronger than that in \cite{yang:scattering:NLW}.
  
\end{Remark}

Now let's review the main ideas for studying the asymptotic behaviors for defocusing semilinear wave equations. The early pioneering works (for example  \cite{Velo87:decay:NLW}, \cite{Pecher82:decay:3d},\cite{Pecher82:NLW:2d}) relied on the following time decay of the potential energy
\begin{align}
\label{eq:timedecay:3D:Pecher}
  \int_{\mathbb{R}^3}|\phi|^{p+1}dx\leq C (1+t)^{\max\{4-2p , -2\}},\quad 1<p<5,
\end{align}
 obtained by using the conformal Killing vector field $t^2\pa_t+r^2 \pa_r$ ($r=|x|$)   as multiplier.  The new vector field method of  Dafermos and Rodnianski can improve the above time decay in the following way: First the $r$-weighted energy estimates derived by using the vector fields $r^{\ga}(\pa_t+\pa_r)$ with $0\leq \ga\leq 2$ as multipliers show that
  \begin{align*}
    \iint_{\mathbb{R}^{1+3}} r^{\ga-1}|\phi|^{p+1}dxdt\leq C ,\quad 0<\ga<p-1.
  \end{align*}
  However in order to obtain time decay of the potential energy, one then needs to combine the $r$-weighted energy estimate with the integrated local energy estimates. A pigeon-hole argument then leads  to the energy flux decay through the outgoing null hypersurface $\H_u$ (constant $u$ hypersurface with $u=\frac{t-r}{2}$)
  \begin{align*}
    \int_{\H_u} |\phi|^{p+1}d\sigma \leq C(1+|u|)^{-\ga} .
  \end{align*}
Integrating in $u$, we end up with a weighted spacetime bound
\begin{align*}
   \iint_{\mathbb{R}^{1+3}} (1+|u|)^{\ga-1-\ep}|\phi|^{p+1}dxdt\leq C ,\quad \forall 0<\ep<\ga-1
\end{align*}
by assuming $\ga>1$, which requires $p>2$. In view of  the above $r$-weighted energy estimate, one then derives the time weighted spacetime bound
\begin{align*}
  \iint_{\mathbb{R}^{1+3}}(1+t+|x|)^{\ga-1-\ep}|\phi|^{p+1}dxdt\leq C.
\end{align*}
   This improves the above time decay \eqref{eq:timedecay:3D:Pecher} for the sub-conformal case $p<3$ and is sufficient to conclude the time decay estimates of the solutions for $p>2$ in \cite{yang:NLW:ptdecay:3D}.

However, the above new vector field method works only in space dimension $d\geq 3$, due to the lack of integrated local energy estimates in lower dimensions. To improve the asymptotic decay estimates of the solution in space dimension two, we in \cite{yang:NLW:2D} introduced non-spherically symmetric vector fields 
\begin{align*}
X=u_1^{\frac{p-1}{2}}(\partial_t-\partial_1)+u_1^{\frac{p-1}{2}-2}x_2^2(\partial_t+\partial_1)+2u_1^{\frac{p-1}{2}-1}x_2\partial_2,\quad u_1=t-x_1+1
\end{align*}
as multipliers applied to regions bounded by the null 
 hyperplane $\{t=x_1\}$ and the initial hypersurface. The advantage of using such non-spherically symmetric vector fields is that we can make use of the reflection symmetry $x_1\rightarrow -x_1$ as well as rotation symmetries. This enables us to derive the time decay of the potential energy 
\begin{align*}
\int_{\mathbb{R}^2}|\phi|^{p+1}dx\leq C (1+t)^{\max\{-\frac{p-1}{2}, -2\} },\quad\forall p>1
\end{align*}
in space dimension two. This decay rate is consistent with that in higher dimensions. However we emphasize here that this method works for all $p>1$ while   a lower bound $p>1+\frac{2}{d-1}$ was required in higher dimensions (see \cite{yang:scattering:NLW}).

For the three dimensional case when $p\leq 2$, we observe that it is not likely to use multipliers with weights higher than $t$. Although the vector fields $r^{\gamma}(\pa_t+\pa_r)$ can be used for all $0\leq \gamma\leq 2$, it does not contain weights in time. In particular these vector fields can only lead to the spatial decay of the solution instead of time decay. Inspired by our previous work in space dimension two, we introduce new non-spherical symmetric weighted vector fields
 \begin{align*}
X=u^2f(u)(\partial_t-\partial_1)+& f(u)(x_2^2+x_3^2)(\partial_t+\partial_1)+2u f(u)(x_2\partial_2+x_3\partial_3)+(2f(u)+1)\partial_t,\\
 &  u=t-x_1,\quad  f(u)=(1+|\max\{u, 0\}|^2)^{\frac{p-3}{2}}.
\end{align*}
 as multipliers. By applying this vector field to the region bounded by the null hyperplane $\{t=x_1-1\}$, the initial hypersurface and the constant $t$-hypersurface, we can derive that 
 \begin{align*}
 \int_{x_1\leq 0} (1+t)^{p-1} |\phi(t, x)|^{p+1}dx\leq C \mathcal{E}_{2, 0}.
 \end{align*}
 Using the symmetry $x_1\rightarrow -x_1$, we then conclude the time decay of the potential energy 
  \begin{align*}
 \int_{\mathbb{R}^3}  |\phi(t, x)|^{p+1}dx\leq C \mathcal{E}_{2, 0} (1+t)^{1-p}
 \end{align*}
 for all $1<p\leq 2$. In view of this, we believe that such method can also lead to the time decay of potential energy in higher dimensions for the full energy sub-critical case.

Regarding the pointwise decay estimates for the solution, we rely on the representation formula. To control the nonlinearity, we apply the above vector fields to the region bounded by the backward light cone $\mathcal{N}^{-}(q)$ emanating from the point $q\in\mathbb{R}^{1+3}$.  To simplify the analysis, we can assume that  $q=(t_0,x_0)$, $x_0=(r_0=|x_0|,0,0)$. This gives the weighted energy estimate
 \begin{align*}
 \int_{\mathcal{N}^{-}(q)}(|t_0-r_0|^{p-1}(1-\frac{x_1-r_0}{|x-x_0|})+|t_0+r_0|^{p-1}(1+\frac{x_1-r_0}{|x-x_0|})+1)|\phi|^{p+1} dx
\leq C \mathcal{E}_{0, 2},
\end{align*}
which is sufficient to conclude the pointwise estimate for the solution in the interior region $|x|\leq t$. The better decay estimates in the exterior region are based on the weighted energy estimate 
 \begin{align*}
 \int_{\mathcal{N}^{-}(q)\cap\{x_1\geq t\}}|r_0+t_0\frac{x_1-r_0}{|x-x_0|}||\phi|^{p+1} dx
\leq C \mathcal{E}_{0, 1}.
\end{align*} 
obtained by using the Lorentz rotation vector field $x_1\pa_t+t\pa_1$ as multiplier.

\textbf{Acknowledgments.} S. Yang is partially supported by NSFC-11701017.

\section{Preliminaries and notations}
\label{sec:notation}

Additional to the Cartesian coordinates $(t, x)=(t, x_1, x_2, x_3)$ for the Minkowski spacetime $\mathbb{R}^{1+3}$, we will also use the null frame 
\[
L_1=\pa_t+\pa_1,\quad \underline{L}_1=\pa_t-\pa_1
\]
with $\pa_i$ be shorthand for $\pa_{x_i}$. For fixed point $q=(t_0, x_0)\in\mathbb{R}^{1+3}$,   
let $(\tilde{t}, \tilde{x})$ be the new Cartesian coordinates centered at $q$. More precisely, define
\[
\tilde{t}=t-t_0,\quad \tilde{x}=x-x_0,\quad \tilde{r}=|\tilde{x}|,\quad \tilde{\om}=\frac{\tilde{x}}{|\tilde{x}|},\quad \tilde{L}=\pa_{\tilde{t}}+\pa_{\tilde{r}},\quad \tilde{\Lb}=\pa_{\tilde{t}}-\pa_{\tilde{r}}.
\]
By translation invariance, note that 
\begin{align*}
\pa_{\tilde{t}}=\pa_{t},\quad \pa_{\tilde{r}}=\tilde{\om}\cdot \tilde{\nabla}=\tilde{\om}\cdot \nabla.
\end{align*}
Here $\nabla$ is the spatial gradient while $\tilde{\nabla}$ is the associated one centered at $q$.

For vector fields $X, Y$ in $\mathbb{R}^{1+3}$, we use the geometric notation $ \langle X, Y\rangle$ meaning the inner product of these two vector fields under the flat Minkowski metric $m_{\mu\nu}$ with non-vanishing components 
\begin{align*}
m_{00}=-1, \quad m_{ii}=1,\quad i=1, 2, 3. 
\end{align*}
Raising and lowering indices are carried out with respect to this metric in the sequel.

As the wave equation is time reversible, without loss of generality, we only consider the case in the future $t\geq 0$. For $q=(t_0, x_0)\in \mathbb{R}^{1+3}$,
let $\mathcal{J}^{-}(q)$ be the causal past 
   \begin{align*}
   \mathcal{J}^{-}(q):=\{(t, x)| |x-x_0|\leq t_0-t,\quad t\geq 0\}.
 \end{align*}
 The boundary contains the past null cone $\mathcal{N}^{-}(q)$ emanating from $q$, that is,
 \begin{align*}
   \mathcal{N}^{-}(q):=\{(t, x)| t_0-t=|x-x_0|,\quad t\geq 0\}.
 \end{align*}
 For $r>0$, we use
$\B_q(r)$ to denote the spatial ball centered at $q=(t_0, x_0)$ with radius $r$. More precisely
\begin{align*}
  \B_q( r)=\{(t,x)| t=t_0, |x-x_0|\leq r\}.
\end{align*}
 The boundary of $\B_q( r)$ is the 2-sphere $\S_q(r)$.

Finally to avoid too many constants, we make a convention that $A\les B$ means there exists a constant $C$, depending only on $p$ and the small constant $ 0<\epsilon<\frac{1}{2}$ such that $A\leq CB$.

\section{Weighted energy estimates through backward light cones}
Following the framework established early in \cite{Strauss:NLW:decay} and developed in \cite{Pecher82:decay:3d},  \cite{yang:NLW:ptdecay:3D}, potential energy decay is of crucial importance to deduce the asymptotic long time behavior for the solution. We begin with the following weighted potential energy estimate through backward light cones. 
\begin{Prop}
\label{prop:EF}
Assume that $1<p\leq 2$.
Let $q=(t_0,r_0, 0, 0)$ be a point in $\mathbb{R}^{1+3}$ with $t_0, r_0 \geq0$. Then the solution $\phi$ of the nonlinear wave equation \eqref{eq:NLW:semi:3d} verifies the following weighted energy estimates
\begin{align}
\label{eq:Ef}
&\int_{\mathcal{N}^{-}(q)}(|t_0-r_0|^{p-1}(1-\tilde{\om}_1)+|t_0+r_0|^{p-1}(1+\tilde{\om}_1)+1)|\phi|^{p+1} d\sigma
\leq C \mathcal{E}_{0, 2},
\\
\label{eq:Ef1}
&\int_{\mathcal{N}^{-}(q)\cap\{x_1\geq t\}}|r_0+t_0\tilde{\om}_1||\phi|^{p+1} d\sigma 
\leq C \mathcal{E}_{0, 1} 
\end{align}
for some constant $C$ depending only on $p$. Here $d\sigma$ is the surface measure and $\tilde{\om}_1=\frac{x_1-r_0}{\sqrt{(x_1-r)^2+x_2^2+x_3^2}}.$
\end{Prop}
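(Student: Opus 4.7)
The plan is to apply the vector field method on the region $\cJ^{-}(q)$. Using the standard stress-energy tensor
\[
T_{\mu\nu}(\phi) = \pa_\mu\phi\,\pa_\nu\phi - \f12 m_{\mu\nu}\Bigl(\pa^\alpha\phi\,\pa_\alpha\phi + \frac{2}{p+1}|\phi|^{p+1}\Bigr),
\]
which is divergence-free on solutions of \eqref{eq:NLW:semi:3d}, one has $\pa^\mu(T_{\mu\nu}X^\nu) = \f12 T^{\mu\nu}\pi^X_{\mu\nu}$ for any multiplier $X$, where $\pi^X$ is the deformation tensor of $X$. Integrating this identity over $\cJ^{-}(q)$ and invoking the divergence theorem yields a balance between (a) the flux through $\mathcal{N}^{-}(q)$, (b) the flux through the initial slice $\cJ^{-}(q) \cap \{t=0\}$, and (c) a bulk term. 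The strategy is to choose $X$ so that (a) produces the weighted $|\phi|^{p+1}$ integrand on the left of the target estimate, (b) is bounded by $\cE_{0,2}$ (respectively $\cE_{0,1}$), and (c) has a favorable sign.

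For \eqref{eq:Ef} I use a non-spherically symmetric multiplier modeled on the vector field displayed in the introduction, adapted to the apex $q = (t_0, r_0, 0, 0)$ and to the reflection symmetry $x_1\to -x_1$. A natural candidate is the symmetrized combination in the null variables $u_\pm = t\mp x_1$,
\[
X = \sum_{\pm}\Bigl[u_\pm^2 f(u_\pm)(\pa_t\mp\pa_1) + f(u_\pm)(x_2^2+x_3^2)(\pa_t\pm\pa_1) + 2u_\pm f(u_\pm)(x_2\pa_2 + x_3\pa_3)\Bigr] + g\,\pa_t,
\]
with $f(u) \approx (1+\max\{u,0\}^2)^{(p-3)/2}$, so that $u^2 f(u) \sim u^{p-1}$ for large $u$, and a bounded correction $g$ producing the residual $+1$ weight. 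A direct computation of $T(X,\cdot)$ paired against the null conormal of $\mathcal{N}^{-}(q)$ then produces the target weight $|t_0\mp r_0|^{p-1}(1\mp\tilde{\om}_1)+1$ in front of $|\phi|^{p+1}$; simultaneously, the $\pa\phi$ quadratic part of the flux is a manifestly non-negative sum of squares, because for each sign the first three summands form a null-conformal structure in the variable $u_\pm$ adapted to $\pa_t\pm\pa_1$. The initial-slice contribution is bounded by $\cE_{0,2}$ because on $\B_q(t_0)$ the multiplier grows at most like $(1+|x|)^2$, matching the weight in the definition of $\cE_{0,2}$.

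For \eqref{eq:Ef1} I use the Killing Lorentz boost $\Omega_{01} = x_1\pa_t + t\pa_1$ on $\cJ^{-}(q)\cap\{x_1\geq t\}$. In $\{x_1\geq t\}$ this field is non-spacelike and future-directed, hence its pairing with the null conormal of $\mathcal{N}^{-}(q)$ is coercive; a short calculation yields the weight of $|\phi|^{p+1}$ on the cone as $|r_0 + t_0\tilde{\om}_1|$. Because $\Omega_{01}$ is Killing, $\pi^{\Omega_{01}}\equiv 0$ and the divergence identity becomes exact: the cone flux is then controlled by the flux on the initial slice (bounded by $\cE_{0,1}$ via $|\Omega_{01}|\les|x|$) plus the flux on the null hyperplane $\{x_1=t\}\cap\cJ^{-}(q)$, which is non-negative since $\Omega_{01}$ is null there, and can be discarded.

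The main obstacle lies in the first estimate: verifying the good sign of the bulk term $\f12 T^{\mu\nu}\pi^X_{\mu\nu}$ after combining the $\pa\phi$ quadratic part with the potential contribution $-\f1{p+1}(\pa^\mu X_\mu)|\phi|^{p+1}$ requires a delicate algebraic rearrangement. The precise tensorial structure of $X$—the coupling of $u_\pm$ to $x_2^2+x_3^2$ and to the angular derivatives $x_j\pa_j$, together with the exponent $(p-3)/2$ in $f$—is tuned exactly so that the quadratic part splits into non-negative squares and the coefficient of $|\phi|^{p+1}$ is either non-negative or absorbed by the defocusing nonlinearity; any other natural-looking weight would leave cross terms of indeterminate sign, mirroring the two-dimensional construction in \cite{yang:NLW:2D}.
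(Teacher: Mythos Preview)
Your treatment of \eqref{eq:Ef1} via the Lorentz boost $\Omega_{01}=x_1\pa_t+t\pa_1$ on $\cJ^{-}(q)\cap\{x_1\geq t\}$ is exactly the paper's argument, and correct as stated.

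For \eqref{eq:Ef}, however, there is a genuine gap. The multiplier $X$ by itself does \emph{not} produce a cone flux whose quadratic part in $\pa\phi$ is a sum of squares; the current $T_{\mu\nu}X^\nu$ contains cross terms of the type $\phi\,\tilde{\Lb}\phi$ and $\phi\,\hat e_j\phi$ coming from the transversal pieces of $X$, and these do not cancel without additional zeroth-order corrections. The paper's current is
\[
J^{X,Y,\chi}_\mu[\phi]=T[\phi]_{\mu\nu}X^\nu-\tfrac12(\pa_\mu\chi)|\phi|^2+\tfrac12\chi\,\pa_\mu|\phi|^2+Y_\mu,
\]
with $\chi=2u f(u)$ and the carefully chosen vector field $Y=(L_1 g)S-(Sg+3g)L_1$, $g=f(u)|\phi|^2$. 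These two modifications are indispensable: $\chi$ adjusts the bulk so that its sign is governed solely by $f'\le0$ and $(p-3)uf\ge(u^2+1)f'$, while $Y$ (which is divergence-free) is what allows the cone flux to be recombined into the nonnegative expression
\[
T[\phi]_{\tilde{\Lb}X_0}+\widetilde Z|\phi|^2-2|\phi|^2\langle\tilde{\Lb},L_1\rangle\ge -\langle\tilde{\Lb},X_0\rangle\frac{|\phi|^{p+1}}{p+1}.
\]
Your proposal omits both $\chi$ and $Y$, and the assertion that the quadratic part is ``manifestly'' a sum of squares is not accurate: in the paper this inequality requires a case split according to whether $X_0$ and $\tilde{\Lb}$ are linearly dependent, and in the generic case one must build an auxiliary null frame $\{X_0,\tilde{\Lb},\hat e_1,\hat e_2\}$ and check that the correction vector $\widetilde Z$ lies in $\mathrm{span}\{\hat e_1,\hat e_2\}$ with $|\widetilde Z|^2=\langle\tilde{\Lb},X_0\rangle\langle L_1,\tilde{\Lb}\rangle$. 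Only then does the flux complete a square. Without these ingredients your bulk and flux signs are both indeterminate.

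A secondary point: rather than symmetrizing $X$ over $u_\pm$, the paper runs the argument once with $u=t-x_1$ to obtain the $(1-\tilde\omega_1)|t_0-r_0|^{p-1}$ weight, and then invokes the reflection $x_1\mapsto -x_1$ to get the $(1+\tilde\omega_1)|t_0+r_0|^{p-1}$ weight separately; summing the two yields \eqref{eq:Ef}. This is cleaner than carrying both halves through the deformation-tensor computation simultaneously.
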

\begin{proof}
Recall the energy momentum tensor for the scalar field $\phi$
\begin{align*}
  T[\phi]_{\mu\nu}=\pa_{\mu}\phi\pa_{\nu}\phi-\f12 m_{\mu\nu}(\pa^\ga \phi \pa_\ga\phi+\frac{2}{p+1} |\phi|^{p+1}).
\end{align*}
For any vector fields $X$, $Y$ and any function $\chi$, define the current
\begin{equation*}
J^{X, Y, \chi}_\mu[\phi]=T[\phi]_{\mu\nu}X^\nu -
\f12\pa_{\mu}\chi \cdot|\phi|^2 + \f12 \chi\pa_{\mu}|\phi|^2+Y_\mu.
\end{equation*}
Then for solution $\phi$ of equation \eqref{eq:NLW:semi:3d} and any domain $\mathcal{D}$ in $\mathbb{R}^{1+3}$, we have the energy identity
\begin{equation}
\label{eq:energy:id}
\iint_{\mathcal{D}}\pa^\mu  J^{X,Y,\chi}_\mu[\phi] d\vol =\iint_{\mathcal{D}}div(Y)+ T[\phi]^{\mu\nu}\pi^X_{\mu\nu}+
\chi \pa_\mu\phi\pa^\mu\phi -\f12\Box\chi\cdot|\phi|^2 +\chi |\phi|^{p+1}  d\vol.
\end{equation}
Here $\pi^X=\f12 \cL_X m$  is the deformation tensor for the vector field $X$.

\def\L{{L_1}}
\def\Lu{{\underline{L}_1}}
\def\u{u}
\def\g{g}
\def\Z{Z}
\def\Za{\widetilde{Z}}
\def\Zb{\widehat{Z}}

In the above energy identity, choose the vector fields $X,$ $Y$ and the function $\chi$ as follows:
\begin{align*}
&X=\u^2f(\u) \Lu 
+f(\u)(x_2^2+x_3^2) \L 
+2\u f(\u)(x_2\partial_2+x_3\partial_3)+(2f(\u)+1)\partial_t, \\
 &Y=(\L\g) S-(S\g+3\g)\L,\quad \chi=2\u f(\u),\quad \g=f(\u)|\phi|^2,\quad \u=t-x_1,\\
 &f(\u)=1\quad\text{for}\quad \u\leq0;\quad f(\u)=(1+\u^2)^{\frac{p-3}{2}}\quad\text{for}\quad \u\geq0.
\end{align*}
Here $S=t\partial_t+r\pa_r$ is the scaling vector field and $ \L=\partial_t+\partial_1$, $\Lu=\partial_t-\partial_1$. We then compute that
\begin{align*}
&\nabla_{\L}X= 0,\quad \nabla_{j}X=2f(\u)x_j\L+2\u f(\u)\partial_j,\quad  j=2,3,\\ 
&\nabla_{\Lu}X= 2(2\u f(\u)+\u^2f'(\u))\Lu+2f'(\u)(x_2^2+x_3^2)\L+4(f(\u)+\u f'(\u))(x_2\partial_2+x_3\partial_3)+4f'(\u)\partial_t.
\end{align*}
In particular the non-vanishing components of the deformation tensor $\pi_{\mu\nu}^X$ are
\begin{align*}
\pi^X_{\L\Lu}&=-2(2\u f(\u)+(\u^2+1)f'(\u)),\quad \pi^X_{\Lu\Lu}=-4f'(\u)(x_2^2+x_3^2+1),\\
 \pi^X_{\Lu \pa_j}&=2\u f'(\u)x_j,\quad \pi^X_{\pa_j\pa_j}=2\u f(\u),\quad j=2,3.
\end{align*}
We also have $div(\L)=0$, $div(S)=4$, $[\L,S]=\L$ and
\begin{align*}
div(Y)&=S\L\g+4\L\g-\L(S\g+3\g)=-[\L,S]\g+\L\g=0.
\end{align*}
Since $\Box\chi=0 $, we therefore can compute that
\begin{align*}
& T[\phi]^{\mu\nu}\pi^X_{\mu\nu}+
\chi \pa_\mu\phi \pa^\mu\phi  +\chi |\phi|^{p+1}-\f12 \Box\chi |\phi|^2\\
=&-(2\u f(\u)+(\u^2+1)f'(\u))(|(\partial_2,\partial_3)\phi|^2+\frac{2}{p+1}|\phi|^{p+1})\\&+ 2\u f(\u)(|(\partial_2,\partial_3)\phi|^2-\pa^\mu \phi \pa_\mu\phi-\frac{2}{p+1}|\phi|^{p+1})\\
& -f'(\u)(x_2^2+x_3^2+1) |\L\phi|^2-2\u f'(\u)(x_2\partial_2\phi+x_3\partial_3\phi) \L\phi+
2\u f(\u)( \pa_\mu\phi \pa^\mu\phi  +|\phi|^{p+1}) \\
=&-f'(\u)(|(x_2,x_3)\L\phi+\u(\partial_2,\partial_3)\phi|^2+|(\partial_2,\partial_3,\L)\phi|^2)+\frac{2|\phi|^{p+1}}{p+1} ((p-3)\u f(\u)-(\u^2+1)f'(\u)).
\end{align*}
Since we are restricting to the range $1<p\leq 2<3$, in view of the definition of the function $f$, we note that 
when $\u\leq0$,
 $$f'(\u)=0, (p-3)\u f(\u)=(p-3)\u\geq0,$$ and 
 $$f'(\u)=(p-3)\u(1+\u^2)^{\frac{p-3}{2}}\leq0, \quad (p-3)\u f(\u)=(\u^2+1)f'(\u), \quad \u\geq0.$$
In particular we always have
\begin{align}\label{eq:f1}
f'(\u)\leq0,\quad (p-3)\u f(\u)\geq(\u^2+1)f'(\u),
\end{align}
which implies that the bulk integral is nonnegative.

Now take the domain $\mathcal{D}$ to be $\mathcal{J}^{-}(q)$ with boundary $\B_{(0, x_0)}(t_0)\cup \mathcal{N}^{-}(q)$. In view of Stokes' formula, the left hand side of the energy identity \eqref{eq:energy:id} is reduced to integrals on the initial hypersurface $\B_{(0, x_0)}(t_0)$ and on the backward light cone $\mathcal{N}^{-}(q)$. Since the bulk integral on the right hand side is nonnegative, we conclude that
\begin{align}\label{eq:N-q}
  \int_{\mathcal{N}^{-}(q)}i_{J^{X, Y, \chi}[\phi]}d\vol+\int_{\B_{(0, x_0)}(t_0)}i_{J^{X, Y, \chi}[\phi]}d\vol\geq 0.
\end{align}
For the integral on the initial hypersurface $\B_{(0, x_0)}(t_0)$, we compute it under the coordinates $(t, x)$
\begin{align*}
  i_{J^{X,Y,\chi}[\phi]}d\vol=&-(J^{X,Y, \chi}[\phi])^{0}dx= ( T[\phi]_{0 \nu}X^\nu -
\f12 \pa_t\chi |\phi|^2 + \f12 \chi\cdot \pa_t |\phi|^2+Y_0)  dx\\
=&\f12f(\u)\left( (x_2^2+x_3^2+\u^2) (|\partial_t\phi|^2+|\nabla\phi|^2+\frac{2}{p+1}|\phi|^{p+1}) +2\u   \pa_t |\phi|^2\right. \\
& \left.+2 (x_2^2+x_3^2-\u^2)\pa_t\phi\cdot \partial_1\phi+4\u  \partial_t\phi (x_2\partial_2\phi+x_3\partial_3\phi) \right)  dx-(f(\u)+\u f'(\u))|\phi|^2  dx\\
&+\f12(2f(\u)+1)(|\partial_t\phi|^2+|\nabla\phi|^2+\frac{2}{p+1}|\phi|^{p+1})dx+(S\g+3\g-t\L\g)dx\\
=&\f12 f(\u)\left( |(x_2,x_3)(\partial_t\phi+\partial_1\phi)+\u(\partial_2,\partial_3)\phi|^2+|\u(\partial_t\phi-\partial_1\phi)+(x_2\partial_2+x_3\partial_3)\phi+2\phi|^2\right.\\
& \qquad \quad \left.+(x_2\pa_3\phi-x_3\pa_2\phi)^2+\frac{2(x_2^2+x_3^2+\u^2)}{p+1}|\phi|^{p+1} \right)dx\\
&+\f12(2f(\u)+1)(|\partial_t\phi|^2+|\nabla\phi|^2+\frac{2}{p+1}|\phi|^{p+1})dx.
\end{align*}
Here we used the relation $\L \u=0$, $S \u=\u$ and the following computation
\begin{align*}
S\g+3\g-t\L\g&=S(f(\u)|\phi|^2)+3f(\u)|\phi|^2-t\L(f(\u)|\phi|^2)\\&=f(\u)S|\phi|^2+(\u f'(\u)+3f(\u))|\phi|^2-tf(\u)\L|\phi|^2
\\&=f(\u)(-\u\partial_1+x_2\partial_2+x_3\partial_3)|\phi|^2+(\u f'(\u)+3f(\u))|\phi|^2.
\end{align*}
Since $0<f(\u)\leq 1$ and $\u=-x_1$ on $\B_{(0, x_0)}(t_0)$, by using Hardy's inequality to control $|\phi|^2$, we can bound the integral on the initial hypersurface by the initial weighted energy
\begin{align}
\label{eq:bxt0}
 \int_{\B_{(0, x_0)}(t_0)} i_{J^{X, Y,\chi}[\phi]}d\vol \leq C \mathcal{E}_{0, 2}.
\end{align}
Next we compute the boundary integral on the backward light cone $\mathcal{N}^{-}(q)$. The surface measure is of the form
\begin{align*}
-i_{J^{X, Y,\chi}[\phi]}d\vol=J_{\tilde{\Lb}}^{X, Y,\chi}[\phi]d\sigma = ( T[\phi]_{\tilde{\Lb}\nu}X^\nu -
\f12(\tilde{\Lb}\chi) |\phi|^2 + \f12 \chi\cdot\tilde{\Lb}|\phi|^2 +Y_{\tilde{\Lb}}) d\sigma.
\end{align*}
Here we recall that the null frame $\{\tilde{\Lb }, \tilde{L}, \tilde{e}_1, \tilde{e}_2\}$ is centered at the point $q$. Since
$$\L \u=0, \quad S \u=\u=-\langle S,  \L\rangle, $$ we have
\begin{align*}
\L\g &=\L(f(\u)|\phi|^2)=f(\u)\L|\phi|^2,\\
 S\g+3\g&=S(f(\u)|\phi|^2)+3f(\u)|\phi|^2=f(\u)S|\phi|^2+(\u f'(\u)+3f(\u))|\phi|^2.
\end{align*}
Note that  $$\tilde{\Lb} \u=\tilde{\Lb}( t)-\tilde{\Lb} (x_1)=-\langle\tilde{\Lb},\partial_t\rangle-\langle\tilde{\Lb},\partial_1\rangle=-\langle\tilde{\Lb},\L\rangle. $$
We therefore can compute that 
\begin{align*}
&-\f12(\tilde{\Lb}\chi) |\phi|^2 + \f12 \chi\cdot\tilde{\Lb}|\phi|^2 +Y_{\tilde{\Lb}}\\
&=-\tilde{\Lb}(\u f(\u)) |\phi|^2 +\u f(\u)\tilde{\Lb}|\phi|^2+(\L\g)\langle S,\tilde{\Lb}\rangle-(S\g+3\g)\langle\L,\tilde{\Lb}\rangle\\
&=-(\tilde{\Lb}\u)(f(\u)+\u f'(\u)) |\phi|^2 +\u f(\u)\tilde{\Lb}|\phi|^2+  f(\u)(\L|\phi|^2)\langle S,\tilde{\Lb}\rangle\\
&\quad +(f(\u)S|\phi|^2+(\u f'(\u)+3f(\u))|\phi|^2)\tilde{\Lb} \u\\
&= f(\u)(\L|\phi|^2)\langle S,\tilde{\Lb}\rangle+(f(\u)S|\phi|^2+2f(\u)|\phi|^2)\tilde{\Lb} \u +\u f(\u)\tilde{\Lb}|\phi|^2\\
&=f(\u)\Z|\phi|^2-f(\u)(S|\phi|^2+2|\phi|^2)\langle\tilde{\Lb},\L\rangle\\
&=f(\u) \left(\Za|\phi|^2-2|\phi|^2\langle\tilde{\Lb},\L\rangle\right).
\end{align*}
Here the vector fields $Z$, $\Za$ are given by 
\begin{align*}
&\Z=\langle S,\tilde{\Lb}\rangle\L+\u\tilde{\Lb}=\langle S,\tilde{\Lb}\rangle\L-\langle S,\L\rangle\tilde{\Lb},\quad \Za=\Z-\langle\tilde{\Lb},\L\rangle S.
\end{align*}
Now we write the vector field $X$ as
\begin{align*}
&X=f(\u)X_0+(2f(\u)+1)\partial_t,\quad X_0=\u^2(\partial_t-\partial_1)+(x_2^2+x_3^2)(\partial_t+\partial_1)+2\u (x_2\partial_2+x_3\partial_3).
\end{align*}
The vector field $X_0$ can be further written as  
\begin{align*}
&X_0=(r^2-t^2)(\partial_t+\partial_1)+2\u S=\langle S,S\rangle\L-2\langle S,\L\rangle S.
\end{align*}
Now we expand the current
\begin{align*}
J_{\tilde{\Lb}}^{X, Y,\chi}[\phi] &=  T[\phi]_{\tilde{\Lb}\nu}X^\nu -
\f12(\tilde{\Lb}\chi) |\phi|^2 + \f12 \chi\cdot\tilde{\Lb}|\phi|^2 +Y_{\tilde{\Lb}}\\
&=f(u)T[\phi]_{\tilde{\Lb}X_0}+(2f(\u)+1)T[\phi]_{\tilde{\Lb}\partial_t}+f(\u)(\Za|\phi|^2-2|\phi|^2\langle\tilde{\Lb},\L\rangle)
\\
&=f(u)(T[\phi]_{\tilde{\Lb}X_0}+\Za|\phi|^2-2|\phi|^2\langle\tilde{\Lb},\L\rangle)+(2f(\u)+1)T[\phi]_{\tilde{\Lb}\partial_t}.
\end{align*}
For the second term, note that 
\begin{align*}
&T[\phi]_{\tilde{\Lb}\partial_t}=\f12 |{\tilde{\Lb}}\phi|^2 + \f12 (|\tilde{\nabb}\phi|^2+\frac{2}{p+1}|\phi|^{p+1})\geq\frac{1}{p+1}|\phi|^{p+1}.
\end{align*}
For the first term we claim that
\begin{align}
\label{eq:TX0}
&T[\phi]_{\tilde{\Lb}X_0}+\Za|\phi|^2-2|\phi|^2\langle\tilde{\Lb},\L\rangle\geq-\langle\tilde{\Lb},X_0\rangle\frac{|\phi|^{p+1}}{p+1}.
\end{align}
At any fixed point of the backward light cone $\mathcal{N}^{-}(q)$, we prove the above claim by discussing three different cases:
\begin{itemize}
\item[(i)] If the vector field $X_0$ vanishes, that is $X_0=0$, then 
\[
0=u=t-x_1=x_2=x_3.
\]
In particular, we have 
 $$S=t\L, \quad -\langle\tilde{\Lb},\L\rangle=\langle \pa_{\tilde{r}}-\pa_{\tilde{t}}, \pa_t+\pa_1 \rangle =\langle \tilde{r}^{-1} \pa_{1}-\pa_{t}, \pa_t+\pa_1 \rangle =1+\widetilde{\omega}_1\geq 0 .$$
 Here recall that $q=(t_0, r_0, 0, 0)$ and $x_2=x_3=0$ for this case. This implies that 
\begin{align*}
&\Za=\langle S,\tilde{\Lb}\rangle\L-\langle\tilde{\Lb},\L\rangle S=0,\\
&T[\phi]_{\tilde{\Lb}X_0}+\Za|\phi|^2-2|\phi|^2\langle\tilde{\Lb},\L\rangle=-2|\phi|^2\langle\tilde{\Lb},\L\rangle\geq 0.
\end{align*}
Hence the above claim holds. 
\item[(ii)] If $X_0\neq0$ and the vector fields $X_0$, $ \tilde{\Lb}$ are linearly dependent, 
by comparing the coefficients of $\pa_t=\pa_{\tilde{t}}$, we conclude that $X_0=\lambda\tilde{\Lb}$ with  $\lambda=\u^2+x_2^2+x_3^2>0$. Recall the definition for the vector fields $X_0$, $\Za$. We can show that 
\begin{align*}
\lambda\Za&=\langle S,X_0\rangle\L-\langle S,\L\rangle X_0-\langle X_0,\L\rangle S\\
&=-\langle S,S\rangle\langle S,\L\rangle\L-\langle S,\L\rangle X_0+2\langle S,\L\rangle^2 S\\
&=-2\langle S,\L\rangle X_0=2\u \lambda\tilde{\Lb},
\end{align*}
which in particular implies that $\Za=2u\tilde{\Lb}$. Note that 
\begin{align*}
& -\lambda\langle\tilde{\Lb},\L\rangle=-\langle X_0,\L\rangle=2\langle S,\L\rangle^2=2\u^2.
\end{align*}
We then can demonstrate that 
\begin{align*}
&T[\phi]_{\tilde{\Lb}X_0}+\Za|\phi|^2-2|\phi|^2\langle\tilde{\Lb},\L\rangle=\lambda|\tilde{\Lb}\phi|^2+2\u\tilde{\Lb}|\phi|^2
+4\u^2|\phi|^2/\lambda=|\lambda\tilde{\Lb}\phi+2\u\phi|^2 \lambda^{-1}\geq 0.
\end{align*}
The above claim follows as $\langle\tilde{\Lb},X_0\rangle=0$.

\item[(iii)] The remaining case is when $X_0\neq 0$ and the vector fields $X_0$, $ \tilde{\Lb}$ are linearly independent. We write
\begin{align*}
&X_0=(\u^2+x_2^2+x_3^2)(\partial_t+\hat{\omega}\cdot\nabla),\quad \tilde{\Lb}=\partial_t-\widetilde{\omega}\cdot\nabla,\\
&\hat{\omega}=(x_2^2+x_3^2-\u^2,2\u x_2,2\u x_3)/(\u^2+x_2^2+x_3^2),\quad |\hat{\omega}|=1,\quad \hat{\omega}\neq -\widetilde{\omega},\\
&\langle\tilde{\Lb},\tilde{\Lb}\rangle=\langle X_0,X_0\rangle=0,\quad -\langle\tilde{\Lb},X_0\rangle=(\u^2+x_2^2+x_3^2)(1+\hat{\omega}\cdot\widetilde{\omega})>0.
\end{align*} 
Here we may note that $\nabla=\tilde{\nabla}$. In particular we see that $ \tilde{\Lb}$, $X_0$ are null vectors which are linearly independent. We thus can construct a null frame $\{X_0,\tilde{\Lb},\hat{e}_1,\hat{e}_2\}$ such that $\langle\tilde{\Lb},\hat{e}_j\rangle=\langle X_0,\hat{e}_j\rangle=0$, $\langle\hat{e}_j,\hat{e}_j\rangle=1$, $\langle\hat{e}_1,\hat{e}_2\rangle=0$ for $j=1,2$. Notice that
\begin{align*}
\langle\Z,S\rangle &=\langle S,\tilde{\Lb}\rangle\langle\L,S\rangle-\langle S,\L\rangle\langle\tilde{\Lb},S\rangle=0,\quad \langle\Z,\tilde{\Lb}\rangle=\langle S,\tilde{\Lb}\rangle\langle\L,\tilde{\Lb}\rangle,\\
\langle\Z,\L\rangle &=-\langle S,\L\rangle\langle\tilde{\Lb},\L\rangle,\quad \langle\Za,\tilde{\Lb}\rangle=\langle\Z,\tilde{\Lb}\rangle-\langle\tilde{\Lb},\L\rangle \langle S,\tilde{\Lb}\rangle=0,\\
\langle\Za,X_0\rangle&=\langle\Z-\langle\tilde{\Lb},\L\rangle S,\langle S,S\rangle\L-2\langle S,\L\rangle S\rangle\\
&=\langle\Z,\L\rangle \langle S,S\rangle+\langle\tilde{\Lb},\L\rangle \langle S,\L\rangle\langle S,S\rangle=0.
\end{align*}
The above computation in particular shows that $\Za\in\text{span}\{\hat{e}_1,\hat{e}_2\} $. We hence can write that 
$$\Za=a_1\hat{e}_1+a_2\hat{e}_2. $$
On the other hand,  we also have
\begin{align*}
&\langle\tilde{\Lb},X_0\rangle=\langle\tilde{\Lb},\langle S,S\rangle\L-2\langle S,\L\rangle S\rangle=\langle\tilde{\Lb},\L\rangle\langle S,S\rangle-2\langle\tilde{\Lb},S\rangle\langle S,\L\rangle ,\\
&\langle\Za,\Za\rangle=\langle\Z,\Z\rangle+\langle\tilde{\Lb},\L\rangle^2\langle S,S\rangle=-2\langle S,\tilde{\Lb}\rangle\langle\L,\tilde{\Lb}\rangle\langle S,\L\rangle+\langle\tilde{\Lb},\L\rangle^2\langle S,S\rangle =\langle\tilde{\Lb},X_0\rangle\langle\L,\tilde{\Lb}\rangle.
\end{align*}
Let $$p_1=-\langle\tilde{\Lb},X_0\rangle, \quad p_2=-\langle\L,\tilde{\Lb}\rangle=1+\tilde{\om}_1.$$ 
Then the above computations show that 
\[
p_1>0,\quad p_2\geq 0, \quad p_1p_2=a_1^2+a_2^2.
\]
We therefore can compute that 
\begin{align*}
&T[\phi]_{\tilde{\Lb}X_0}+\Za|\phi|^2-2|\phi|^2\langle\tilde{\Lb},\L\rangle+\langle\tilde{\Lb},X_0\rangle\frac{|\phi|^{p+1}}{p+1}\\
=&\tilde{\Lb}\phi X_0\phi-\frac{1}{2}\langle \tilde{\Lb}, X_0 \rangle (\pa^\ga\phi\pa_\ga\phi +\frac{2}{p+1}|\phi|^{p+1})+\Za|\phi|^2-2|\phi|^2\langle\tilde{\Lb},\L\rangle+\langle\tilde{\Lb},X_0\rangle\frac{|\phi|^{p+1}}{p+1}\\
=&\tilde{\Lb}\phi X_0\phi-\frac{1}{2}\langle \tilde{\Lb}, X_0 \rangle (\pa^{X_0}\phi X_0\phi +\pa^{\tilde{\Lb}}\phi \tilde{\Lb}\phi )+\f12 p_1(|\hat{e}_1\phi|^2+|\hat{e}_2\phi|^2)+2(a_1\hat{e}_1\phi+a_2\hat{e}_2\phi)\phi+2p_2|\phi|^2\\
=& \f12 p_1 (|\hat{e}_1\phi+2p_1^{-1} a_1\phi|^2+|\hat{e}_2\phi+2 p_1^{-1} a_2 \phi|^2)\geq0.
\end{align*}
This means that the above claim \eqref{eq:TX0} always holds. 
\end{itemize}
In view of the estimate \eqref{eq:TX0}, we then conclude that
\begin{align*}
&J_{\tilde{\Lb}}^{X, Y,\chi}[\phi]\geq\frac{-f(\u)\langle\tilde{\Lb},X_0\rangle +2f(\u)+1}{p+1}|\phi|^{p+1}.
\end{align*}
Now we compute $-\langle\tilde{\Lb},X_0\rangle$ under the coordinates $(\tilde{t}, \tilde{x})$ centered at $q=(t_0, r_0,0,0)$. We have
\begin{align*}
&(t,x)=(t_0+\widetilde{t},r_0+\widetilde{x}_1,\widetilde{x}_2,\widetilde{x}_3)
=(t_0+\widetilde{t},r_0+\widetilde{r}\widetilde{\omega}_1,\widetilde{r}\widetilde{\omega}_2,\widetilde{r}\widetilde{\omega}_3).
\end{align*}
Note that on the backward light cone $\mathcal{N}^{-}(q)$, we also have $ \widetilde{t}=-\widetilde{r}$. We thus 
can compute that
\begin{align*}
-\langle\tilde{\Lb},X_0\rangle&=(\u^2+x_2^2+x_3^2)+(x_2^2+x_3^2-\u^2)\widetilde{\omega}_1+2\u x_2\widetilde{\omega}_2+2\u x_3\widetilde{\omega}_3\\
&=\u^2(1-\widetilde{\omega}_1)+(x_2^2+x_3^2)(1+\widetilde{\omega}_1)+2\u\widetilde{r}(\widetilde{\omega}_2^2+\widetilde{\omega}_3^2)
\\
&=\u^2(1-\widetilde{\omega}_1)+\widetilde{r}^2(1-\widetilde{\omega}_1^2)(1+\widetilde{\omega}_1)+2\u\widetilde{r}(1-\widetilde{\omega}_1^2)
\\
&=(\u+\widetilde{r}(1+\widetilde{\omega}_1))^2(1-\widetilde{\omega}_1)\\
&=(t_0+\tilde{t}-r_0-\widetilde{x}_1+\widetilde{r}+\widetilde{x}_1))^2(1-\widetilde{\omega}_1)\\
&=(t_0-r_0)^2(1-\widetilde{\omega}_1)\geq0
\end{align*}
on $\mathcal{N}^{-}(q)$. For the case when $u\leq 0$, by the definition of $f(u)$, we have the lower bound 
\begin{align*}
-f(\u)\langle\tilde{\Lb},X_0\rangle+2f(\u)+1=3+(t_0-r_0)^2(1-\widetilde{\om}_1)\geq 1+ |t_0-r_0|^{p-1} (1-\widetilde{\om}_1)
\end{align*}
as $1<p\leq 2$ and $|\widetilde{\om}_1|\leq 1$. For the case when $u>0$, note that on $\mathcal{N}^{-}(q)$
\begin{align*}
0<u=t-x_1=t_0+\tilde{t}-r_0-\tilde{r}\widetilde{\om}_1=t_0-r_0-\tilde{r}(1+\widetilde{\om}_1)\leq t_0-r_0.
\end{align*}
We therefore can bound that 
\begin{align*}
-f(\u)\langle\tilde{\Lb},X_0\rangle+2f(\u)+1&= 1+2(1+u^2)^{\frac{p-3}{2}}+(t_0-r_0)^2(1-\widetilde{\om}_1)(1+u^2)^{\frac{p-3}{2}}\\
&\geq 1+ (1+(t_0-r_0)^{2} )(1-\widetilde{\om}_1)(1+u^2)^{\frac{p-3}{2}}\\
&\geq 1+ |t_0-r_0|^{p-1} (1-\widetilde{\om}_1).
\end{align*}
Here again we used the assumption that $p<3$. Hence in any case, we have shown that 
\begin{align*}
&J_{\tilde{\Lb}}^{X, Y,\chi}[\phi]\geq\frac{-f(\u)\langle\tilde{\Lb},X_0\rangle+2f(\u)+1}{p+1}|\phi|^{p+1}
\geq\frac{|t_0-r_0|^{p-1} (1-\widetilde{\omega}_1)+1}{p+1}|\phi|^{p+1}.
\end{align*}
In other words, we have the lower bound for the integral on the backward light cone $\mathcal{N}^{-}(q)$
\begin{align*}
  -\int_{\mathcal{N}^{-}(q)}i_{J^{X, Y, \chi}[\phi]}d\vol=\int_{\mathcal{N}^{-}(q)}J_{\tilde{\Lb}}^{X, Y,\chi}[\phi]d\sigma \geq\int_{\mathcal{N}^{-}(q)}\frac{|t_0-r_0|^{p-1} (1-\widetilde{\omega}_1)+1}{p+1}|\phi|^{p+1}d\sigma,
\end{align*}
which together with estimates \eqref{eq:N-q}, \eqref{eq:bxt0} implies that 
\begin{align}
\label{eq:000}
  \int_{\mathcal{N}^{-}(q)}\frac{|t_0-r_0|^{p-1} (1-\widetilde{\omega}_1)+1}{p+1}|\phi|^{p+1}dx\leq C \mathcal{E}_{0, 2}.
\end{align}
To conclude estimate \eqref{eq:Ef} of the proposition, we make use of the reflection symmetry additional to the spherical symmetry. More precisely, by changing variable $x_1\rightarrow -x_1$ in the above argument, that is, setting $u=t+x_1$ and $L_1=\pa_t-\pa_1$, $\underline{L}_1=\pa_t+\pa_1$ accordingly (the point $q$ is still fixed), we also have
\begin{align}
\label{eq:111}
   \int_{\mathcal{N}^{-}(q)}\frac{|t_0+r_0|^{p-1} (1+\widetilde{\omega}_1)+1}{p+1}|\phi|^{p+1}dx\leq C \mathcal{E}_{0, 2}.
\end{align}
Alternative interpretation is that the above estimate \eqref{eq:000} also holds at the point $q^{-}=(t_0, -r_0, 0, 0)$ $r_0<0$ (with positive sign of $\widetilde{\omega}_1$). Then by spherical symmetry, the associated estimate is valid at point $q=(t_0, r_0, 0, 0)$, which is exactly the estimate \eqref{eq:111}. These two estimates lead to \eqref{eq:Ef}.

\bigskip

To finish the proof for the Proposition, it remains to show estimate \eqref{eq:Ef1}, which will be mainly used to control the solution in the exterior region. Inspired by the method in \cite{yang:NLW:1D:p}, we make use of the Lorentz rotation in this region. 

In the energy identity \eqref{eq:energy:id}, choose the vector fields and function $\chi$ as follows $$X=x_1\partial_t+t\partial_1,\quad Y=0,\quad \chi=0.$$
 Then $ \pi^X=0$ and
\begin{align*}
&div(Y)+T[\phi]^{\mu\nu}\pi^X_{\mu\nu}+
\chi \pa_\mu\phi \pa^\mu\phi  +\chi\phi\Box\phi-\f12 \Box\chi |\phi|^2=0.
\end{align*}
Let the domain $\mathcal{D}$ be $\mathcal{J}^{-}(q)\cap\{x_1\geq t\}$ with boundary $(\B_{(0, x_0)}(t_0)\cap\mathcal{D})\cup (\mathcal{N}^{-}(q)\cap\mathcal{D})\cup(\{x_1= t\}\cap\mathcal{D})$. By using Stokes' formula, we have the weighted energy conservation adapted to these boundaries. For the integral on the initial hypersurface $\B_{(0, x_0)}(t_0)$, we have
\begin{align}
\label{eq:bxt01}
 \int_{\B_{(0, x_0)}(t_0)\cap\mathcal{D}} i_{J^{X, Y,\chi}[\phi]}d\vol 
&=\f12 \int_{\B_{(0, x_0)}(t_0)\cap\mathcal{D}} x_1(  |\partial_t\phi|^2 +|\nabla\phi|^2+\frac{2}{p+1}|\phi|^{p+1}) dx\leq C \mathcal{E}_{0, 1}.
\end{align}
On the null hypersurface $\{x_1= t\}\cap\mathcal{D}$, we have 
\[
X=x_1\pa_t+t\pa_1=t(\pa_t+\pa_1)=t L_1.
\]
Thus the surface measure is of the form
\begin{align*}
-i_{J^{X,Y, \chi}[\phi]}d\vol&=(J^{X,Y, \chi}[\phi])_{L_1}d\sigma=T[\phi]_{X L_1} d\sigma=tT[\phi]_{L_1 L_1} d\sigma=t|L_1\phi|^2d\sigma.
\end{align*}
This in particular shows that 
\begin{align}
\label{eq:t=x1}
 \int_{\{x_1= t\}\cap\mathcal{D}} -i_{J^{X, Y,\chi}[\phi]}d\vol \geq0.
\end{align}
Here keep in mind that we only consider the estimates in the future $t\geq 0$.

Finally for the integral on the backward light cone $\mathcal{N}^{-}(q)\cap\mathcal{D}$, similarly, we first can write the surface measure as
\begin{align*}
-i_{J^{X,Y, \chi}[\phi]}d\vol&=-(J^{X,Y, \chi}[\phi])_{\tilde{\Lb}}d\sigma=T[\phi]_{X \tilde{\Lb}} d\sigma.
\end{align*}
Now we need to write the vector field $X$ under the new null frame $\{\tilde{L}, \tilde{\Lb}, \tilde{e}_1, \tilde{e}_2\}$ centered at the point $q$. Note that
\begin{align*}
\pa_1=\tilde{\pa_{1}}=\tilde{\om}_1\pa_{\tilde{r}}+ \hat{e}_1\cdot(\tilde{\nabla}-\tilde{\om}\pa_{\tilde{r}}),\quad \hat{e}_1=(1,0,0).
\end{align*}
Then we have
\begin{align*}
X=x_1\partial_t+t\partial_1
&=x_1\pa_{\tilde{t}}+t(\tilde{\om}_1\pa_{\tilde{r}}+ \hat{e}_1\cdot\tilde{\nabb})=\f12 (x_1+t\tilde{\om}_1)\tilde{L}+\f12 (x_1-t\tilde{\om}_1) \tilde{\Lb}+t\hat{e}_1\cdot\tilde{\nabb}.
\end{align*}
Here $\tilde{\nabb}=\tilde{\nabla}-\tilde{\om}\pa_{\tilde{r}}$. Then we can compute the quadratic terms
\begin{align*}
T[\phi]_{X\tilde{\Lb}} 
=& \f12 (x_1-t\tilde{\om}_1) |{\tilde{\Lb}}\phi|^2 + \f12 (x_1+t\tilde{\om}_1) (|\tilde{\nabb}\phi|^2+\frac{2}{p+1}|\phi|^{p+1})+t({\tilde{\Lb}}\phi) (\hat{e}_1\cdot \tilde{\nabb})\phi.
\end{align*}
Since
\begin{align*}
  |(\hat{e}_1\cdot \tilde{\nabb})\phi|=|((\hat{e}_1-\tilde{\om}_1\tilde{\om})\cdot \tilde{\nabb})\phi|\leq |\hat{e}_1-\tilde{\om}_1\tilde{\om}||\tilde{\nabb}\phi|=\sqrt{1-\tilde{\om}_1^2}|\tilde{\nabb}\phi|,
\end{align*}
restricted to the region $\mathcal{N}^{-}(q)\cap\mathcal{D}$ where $x_1\geq t\geq 0$,  
the pure quadratic terms are nonnegative
\begin{align*}
&\f12 (x_1-t\tilde{\om}_1) |\tilde{\Lb}\phi|^2+\f12(x_1+t\tilde{\om}_1) |\tilde{\nabb}\phi|^2+t{\tilde{\Lb}}\phi(\hat{e}_1\cdot \tilde{\nabb})\phi\\
&\geq  
\f12 t \left( (1-\tilde{\om}_1) |\tilde{\Lb}\phi|^2+ (1+\tilde{\om}_1) |\tilde{\nabb}\phi|^2-2\sqrt{1-\tilde{\om}_1^2}|\tilde{\Lb}\phi | |\tilde{\nabb}\phi| \right)\geq 0.
\end{align*}
In particular on $\mathcal{N}^{-}(q)\cap\mathcal{D}$, we have
\begin{align*}
T[\phi]_{X\tilde{\Lb}} 
\geq& \frac{x_1+t\tilde{\om}_1}{p+1}|\phi|^{p+1}.
\end{align*}
This leads to the lower bound 
\begin{align}
\label{eq:N-qD}
 \int_{\mathcal{N}^{-}(q)\cap\mathcal{D}} -i_{J^{X, Y,\chi}[\phi]}d\vol \geq\int_{\mathcal{N}^{-}(q)\cap\mathcal{D}}\frac{x_1+t\tilde{\om}_1}{p+1}|\phi|^{p+1}d\sigma.
\end{align}
For such choice of vector fields, we have the weighted energy conservation
\begin{align*}
  \int_{\mathcal{N}^{-}(q)\cap\mathcal{D}}i_{J^{X, Y, \chi}[\phi]}d\vol+\int_{\{x_1= t\}\cap\mathcal{D}} i_{J^{X, Y,\chi}[\phi]}d\vol+\int_{\B_{(0, x_0)\cap\mathcal{D}}(t_0)}i_{J^{X, Y, \chi}[\phi]}d\vol= 0.
\end{align*}
In view of the above estimates  \eqref{eq:bxt01}, \eqref{eq:t=x1}, \eqref{eq:N-qD}, we conclude that
\begin{align}\label{eq:N-q1}
   &\int_{\mathcal{N}^{-}(q)\cap\mathcal{D}}\frac{x_1+t\tilde{\om}_1}{p+1}|\phi|^{p+1}d\sigma\leq C\mathcal{E}_{0, 1}.
\end{align}
Under the coordinates $(\tilde{t}, \tilde{x})$ centered at $q=(t_0, r_0,0,0)$, we have
\begin{align*}
&(t,x)=(t_0+\widetilde{t},r_0+\widetilde{x}_1,\widetilde{x}_2,\widetilde{x}_3)
=(t_0+\widetilde{t},r_0+\widetilde{r}\widetilde{\omega}_1,\widetilde{r}\widetilde{\omega}_2,\widetilde{r}\widetilde{\omega}_3).
\end{align*}
Note that $ \widetilde{t}=-\widetilde{r}$ on $\mathcal{N}^{-}(q)$. We then can write 
$$0\leq x_1+t\tilde{\om}_1=r_0+\widetilde{r}\widetilde{\omega}_1+(t_0-\widetilde{r})\tilde{\om}_1=r_0+t_0\tilde{\om}_1$$
 on $\mathcal{N}^{-}(q)\cap\mathcal{D}$. The uniform bound \eqref{eq:Ef1} then follows from \eqref{eq:N-q1} by noting that  $$ \mathcal{N}^{-}(q)\cap\mathcal{D}= \mathcal{N}^{-}(q)\cap\{x_1\geq t\}.$$

 \end{proof}

\section{Asymptotic pointwise behaviors for the solutions}
Following the framework developed in \cite{yang:NLW:ptdecay:3D}, we now use the weighted energy estimates through the backward light cone obtained in the previous section to control the nonlinearity.  
  For this purpose, we need the following integration bound: for constants $A>0,$ $B>0$, $ \gamma>1$, there holds
 \begin{align}
 \label{eq:AB}
    &\int_{\S_{(t_0-\tilde{r}, x_0)}(\tilde{r})} ((1+\tilde{\om}_1)A+B)^{-\gamma} d\tilde{\om} =2\pi\int_{-1}^1 ((1+\tilde{\om}_1)A+B)^{-\gamma}d\tilde{\om}_1\leq C A^{-1}B^{1-\gamma}
  \end{align}
 with constant $C$ depending only on $ \gamma$. 

Now we prove the main Theorem \ref{thm:main}.
 For any point $q=(t_0, x_0)$ in $\mathbb{R}^{1+3}$, recall the representation formula for linear wave equation
\begin{equation}
\label{eq:rep4phi:ex}
\begin{split}
4\pi\phi(t_0, x_0)&=\int_{\tilde{\om}}t_0  \phi_1(x_0+t_0\tilde{\om})d\tilde{\om}+\pa_{t_0}\big(\int_{\tilde{\om}}t_0  \phi_0(x_0+t_0\tilde{\om})d\tilde{\om}   \big)-\int_{\mathcal{N}^{-}(q)}|\phi|^{p-1} \phi \ \tilde{r} d\tilde{r}d\tilde{\om}.
\end{split}
\end{equation}
The first two terms are linear evolution, relying only on the initial data. Standard Sobolev embedding leads the decay estimate 
\begin{align*}
  |\int_{\tilde{\om}}t_0  \phi_1(x_0+t_0\tilde{\om})d\tilde{\om}+\pa_{t_0}\big(\int_{\tilde{\om}}t_0  \phi_0(x_0+t_0\tilde{\om})d\tilde{\om}   \big)|
  &\les (1+t_0+|x_0|)^{-1}\sqrt{\mathcal{E}_{1, 2} }.
\end{align*}
We control the nonlinear term by using the weighted energy estimates derived in Proposition \ref{prop:EF}. 
Without lose of generality (or by spatial rotation), we can assume that $x_0=(r_0,0,0)$ with $r_0=|x_0|$. Let $$u_0=|t_0-r_0|+1,\quad v_0=t_0+r_0+1.$$
Since $0<p-1\leq 1$, it holds that 
$$1+|t_0-r_0|^{p-1}\geq(1+|t_0-r_0|)^{p-1}=u_0^{p-1},\quad 1+|t_0+r_0|^{p-1}\geq(1+|t_0+r_0|)^{p-1}=v_0^{p-1}. $$ 
Therefore we have the lower bound 
\begin{align*}
  2(|t_0-r_0|^{p-1}(1-\tilde{\om}_1)+|t_0+r_0|^{p-1}(1+\tilde{\om}_1)+2)
  &\geq 2(1+\widetilde{\omega}_1)v_0^{p-1}+2(1-\widetilde{\omega}_1)u_0^{p-1}\\ 
  &\geq 
  (1+\widetilde{\omega}_1)v_0^{p-1}+2u_0^{p-1}
\end{align*}
for $\widetilde{\omega}_1\in[-1,1].$ 
In view of Proposition \ref{prop:EF}, we derive that
\begin{align}\label{eq:v01}
  &\int_{\mathcal{N}^{-}(q)}((1+\widetilde{\omega}_1)v_0^{p-1}+u_0^{p-1})|\phi|^{p+1}d\sigma\\ 
  \notag
  &\leq 2\int_{\mathcal{N}^{-}(q)}(|t_0-r_0|^{p-1}(1-\tilde{\om}_1)+|t_0+r_0|^{p-1}(1+\tilde{\om}_1)+2)|\phi|^{p+1}d\sigma\\
  \notag
  & \leq C\mathcal{E}_{0, 2}.
\end{align}
We first consider the case when $1<p<2.$ In the exterior region when $t_0\leq r_0$, note that the backward light cone  $ \mathcal{N}^{-}(q)$ entirely locates in the region $ \{x_1\geq t\}$. Moreover
\begin{align*}
  &4(1+r_0+t_0\widetilde{\omega}_1)=2(1+\widetilde{\omega}_1)v_0+2(1-\widetilde{\omega}_1)u_0\geq(1+\widetilde{\omega}_1)(v_0+u_0)+(1-\widetilde{\omega}_1)u_0
  =(1+\widetilde{\omega}_1)v_0+2u_0
\end{align*}
for $\widetilde{\omega}_1\in[-1,1].$ Then by Proposition \ref{prop:EF} as well as the standard energy estimate, we have
\begin{align*}
  &\int_{\mathcal{N}^{-}(q)}((1+\widetilde{\omega}_1)v_0+u_0)|\phi|^{p+1}d\sigma\leq 4\int_{\mathcal{N}^{-}(q)}(1+r_0+t_0\widetilde{\omega}_1)|\phi|^{p+1}d\sigma\leq C\mathcal{E}_{0, 2}.
\end{align*}
Under the coordinates centered at $q$, the surface measure $d\sigma$ can be written as $\tilde{r}^2d\tilde{r}d\tilde{\om}$. By using the integration bound \eqref{eq:AB} with $ \gamma=p$, we can estimate that
\begin{align*}
  &|\int_{\mathcal{N}^{-}(q) }|\phi|^{p-1}\phi \ \tilde{r} d\tilde{r}d\tilde{\om}|\\
  &\les \left(\int_{\mathcal{N}^{-}(q)}((1+\widetilde{\omega}_1)v_0+u_0)|\phi|^{p+1}\ \tilde{r}^{2} d\tilde{r}d\tilde{\om}\right)^{\frac{p}{p+1}}  \cdot \left(\int_{\mathcal{N}^{-}(q) }((1+\widetilde{\omega}_1)v_0+u_0)^{-p} \tilde{r}^{1-p} d\tilde{r}d\tilde{\om}\right)^{\frac{1}{p+1}}\\
  &\les (\mathcal{E}_{0, 2} )^{\frac{p}{p+1}} \left(\int_{0}^{t_0}  v_0^{-1} u_0^{1-p}\tilde{r}^{1-p} d\tilde{r} \right)^{\frac{1}{p+1}}\\
  &\les (\mathcal{E}_{0, 2} )^{\frac{p}{p+1}} \left(v_0^{-1} u_0^{1-p}t_0^{2-p}\right)^{\frac{1}{p+1}}\\
  &\les (\mathcal{E}_{0, 2} )^{\frac{p}{p+1}}(u_0v_0)^{\frac{1-p}{p+1}}.
\end{align*}
In particular, the solution $\phi$ verifies the following decay estimate in the exterior region
\begin{align*}
  |\phi(t_0, x_0)|\les v_0^{-1}(\mathcal{E}_{1, 2})^{\frac{1}{2}}+(u_0v_0)^{\frac{1-p}{p+1}}(\mathcal{E}_{0, 2})^{\frac{p}{p+1}}\les (u_0v_0)^{\frac{1-p}{p+1}}(\mathcal{E}_{1, 2}^{\frac{1}{2}}+\mathcal{E}_{0, 2}^{\frac{p}{p+1}}).
\end{align*}
In the interior region when $t_0\geq r_0$, we rely on the following improved weighted energy estimate
\begin{align}
\label{eq:v0u0}
  &\int_{\mathcal{N}^{-}(q)}((1+\widetilde{\omega}_1)(v_0^{p-1}+\tilde{r}u_0^{p-2})+u_0^{p-1})|\phi|^{p+1}d\sigma\leq C\mathcal{E}_{0, 2}.
\end{align}
In fact, from the above weighted energy estimate \eqref{eq:v01}, we conclude that
\begin{align}\label{eq:v02}
  &\int_{\mathcal{N}^{-}(q)\cap\{(1+\widetilde{\omega}_1)\tilde{r}\leq 2u_0\}}(1+\widetilde{\omega}_1)\tilde{r}u_0^{p-2}|\phi|^{p+1}d\sigma\leq 2\int_{\mathcal{N}^{-}(q)}u_0^{p-1}|\phi|^{p+1}d\sigma\leq C\mathcal{E}_{0, 2}.
\end{align}
On the other hand, for the point $(t,x)\in\mathcal{N}^{-}(q)$ such that $(1+\widetilde{\omega}_1)\tilde{r}\geq 2u_0$, note that 
\begin{align*}
&(t,x)=(t_0+\widetilde{t},r_0+\widetilde{x}_1,\widetilde{x}_2,\widetilde{x}_3)
=(t_0-\widetilde{r},r_0+\widetilde{r}\widetilde{\omega}_1,\widetilde{r}\widetilde{\omega}_2,\widetilde{r}\widetilde{\omega}_3),\quad 0\leq\widetilde{r}=-\widetilde{t}\leq t_0 
\end{align*}
In particular we have 
\[
x_1-t=r_0-t_0+(1+\widetilde{\omega}_1)\tilde{r}\geq 1-u_0+2u_0>0.
\]
This shows that 
\[
\mathcal{N}^{-}(q)\cap\{(1+\widetilde{\omega}_1)\tilde{r}\geq 2u_0\} \subset \mathcal{N}^{-}(q)\cap\{x_1\geq t\}.
\]
Moreover note that
\begin{align*}
&r_0+t_0\tilde{\om}_1=r_0-t_0+(1+\widetilde{\omega}_1)t_0=1-u_0+(1+\widetilde{\omega}_1)t_0\geq -\f12 (1+\widetilde{\omega}_1)\tilde{r}+(1+\widetilde{\omega}_1)\tilde{r}=\f12 (1+\widetilde{\omega}_1)\tilde{r}.
\end{align*}
Since $p\leq 2$ and $u_0\geq 1$, by Proposition \ref{prop:EF}, we can show that
\begin{align}\label{eq:v03}
  \int_{\mathcal{N}^{-}(q)\cap\{(1+\widetilde{\omega}_1)\tilde{r}\geq 2u_0\}}(1+\widetilde{\omega}_1)\tilde{r}u_0^{p-2}|\phi|^{p+1}d\sigma &\leq \int_{\mathcal{N}^{-}(q)\cap\{(1+\widetilde{\omega}_1)\tilde{r}\geq 2u_0\}}(1+\widetilde{\omega}_1)\tilde{r}|\phi|^{p+1}d\sigma \\ 
  \notag
  &\leq 2\int_{\mathcal{N}^{-}(q)\cap\{x_1\geq t\}}|r_0+t_0\tilde{\om}_1||\phi|^{p+1}d\sigma\\
\notag
  &\leq C\mathcal{E}_{0, 2}.
\end{align}
The improved estimate \eqref{eq:v0u0} then follows from \eqref{eq:v01}, \eqref{eq:v02} and \eqref{eq:v03}. 

Now using the integration bound \eqref{eq:AB} with $ \gamma=p$, we can show that
\begin{align*}
  &|\int_{\mathcal{N}^{-}(q) }|\phi|^{p-1}\phi \ \tilde{r} d\tilde{r}d\tilde{\om}|\\
  \les& \left(\int_{\mathcal{N}^{-}(q)}((1+\widetilde{\omega}_1)(v_0^{p-1}+\tilde{r}u_0^{p-2})+u_0^{p-1})|\phi|^{p+1}\ \tilde{r}^{2} d\tilde{r}d\tilde{\om}\right)^{\frac{p}{p+1}}  \cdot\\& \left(\int_{\mathcal{N}^{-}(q) }((1+\widetilde{\omega}_1)(v_0^{p-1}+\tilde{r}u_0^{p-2})+u_0^{p-1})^{-p} \tilde{r}^{1-p} d\tilde{r}d\tilde{\om}\right)^{\frac{1}{p+1}}\\
  \les& (\mathcal{E}_{0, 2})^{\frac{p}{p+1}} \left(\int_{0}^{t_0}  (v_0^{p-1}+\tilde{r}u_0^{p-2})^{-1} u_0^{-(p-1)^2}\tilde{r}^{1-p} d\tilde{r} \right)^{\frac{1}{p+1}}\\
  \les& (\mathcal{E}_{0, 2})^{\frac{p}{p+1}} \left(v_0^{-(p-1)^2}u_0^{(p-2)^2} u_0^{-(p-1)^2}\right)^{\frac{1}{p+1}}\\
  =& (\mathcal{E}_{0, 2})^{\frac{p}{p+1}}v_0^{-\frac{(p-1)^2}{p+1}}u_0^{\frac{3-2p}{p+1}}.
\end{align*}
Here we used the fact that for positive constants $A>0$, $B>0$, it holds that
\begin{align*}
  &\int_{0}^{+\infty}  (A+\tilde{r}B)^{-1} \tilde{r}^{1-p} d\tilde{r}=A^{-1}(A/B)^{2-p}\int_{0}^{+\infty}  (1+z)^{-1} z^{1-p} dz=C_pA^{1-p}B^{p-2}
\end{align*}
for some constant $C_p$ depending only on $p$. 

Therefore the solution $\phi$ satisfies the following estimate in the interior region
\begin{align*}
  |\phi(t_0, x_0)|\les v_0^{-1} \mathcal{E}_{1, 2}^{\frac{1}{2}}+v_0^{-\frac{(p-1)^2}{p+1}}u_0^{\frac{3-2p}{p+1}} \mathcal{E}_{0, 2}^{\frac{p}{p+1}}\les v_0^{-\frac{(p-1)^2}{p+1}}u_0^{\frac{3-2p}{p+1}}(\mathcal{E}_{1, 2}^{\frac{1}{2}}+\mathcal{E}_{0, 2}^{\frac{p}{p+1}}).
\end{align*}
By our convention, the implicit constant relies only on $p$. Recall the definition of $u_0$, $v_0$, $r_0$. We have shown the desired pointwise estimates for the solution of the main Theorem \ref{thm:main} for all $1<p<2$.

\bigskip

Finally to finish the proof for the main Theorem, it remains to discuss the end point case when $p=2$. 
Fix time $T>0$. Define
\begin{align*}
M=\sup\limits_{0\leq t\leq T,x\in\mathbb{R}^3} |\phi(t, x)|.
\end{align*}
In view of Remark \ref{remark2}, $M$ is finite for all $T>0$. Choose small constant $\ep$ such that $0<\epsilon<\frac{1}{2}$. From the weighted energy estimate \eqref{eq:v01} as  well as the integration bound \eqref{eq:AB}, similarly we can show that
\begin{align*}
  |\int_{\mathcal{N}^{-}(q) }|\phi| \phi \ \tilde{r} d\tilde{r}d\tilde{\om}|
  &\leq M^{\epsilon}\int_{\mathcal{N}^{-}(q) }|\phi|^{2-\epsilon}\ \tilde{r} d\tilde{r}d\tilde{\om}\\
  &\les M^{\epsilon}\left(\int_{\mathcal{N}^{-}(q)}((1+\widetilde{\omega}_1)v_0+u_0)|\phi|^{3}\ \tilde{r}^{2} d\tilde{r}d\tilde{\om}\right)^{\frac{2-\epsilon}{3}}  \\&
  \quad  \cdot\left(\int_{\mathcal{N}^{-}(q) }((1+\widetilde{\omega}_1)v_0+u_0)^{-\frac{2-\epsilon}{1+\epsilon}} \tilde{r}^{-\frac{1-2\epsilon}{1+\epsilon}} d\tilde{r}d\tilde{\om}\right)^{\frac{1+\epsilon}{3}}\\
  & \les M^{\epsilon}\mathcal{E}_{0, 2}^{\frac{2-\epsilon}{3}} \left(\int_{0}^{t_0}  v_0^{-1} u_0^{-\frac{1-2\epsilon}{1+\epsilon}}\tilde{r}^{-\frac{1-2\epsilon}{1+\epsilon}} d\tilde{r} \right)^{\frac{1+\epsilon}{3}}\\
  & \les M^{\epsilon} \mathcal{E}_{0, 2}^{\frac{2-\epsilon}{3}} \left(v_0^{-1} u_0^{-\frac{1-2\epsilon}{1+\epsilon}}t_0^{\frac{3\epsilon}{1+\epsilon}}\right)^{\frac{1+\epsilon}{3}}\\
 & \les M^{\epsilon} \mathcal{E}_{0, 2}^{\frac{2-\epsilon}{3}} (u_0v_0)^{-\frac{1-2\epsilon}{3}}.
\end{align*}
Since  $0< \epsilon<1/2$, this shows that
\begin{align*}
  |\phi(t_0, x_0)|\les v_0^{-1} \mathcal{E}_{1, 2}^{\frac{1}{2}}+M^{\epsilon} \mathcal{E}_{0, 2}^{\frac{2-\epsilon}{3}} (u_0v_0)^{-\frac{1-2\epsilon}{3}}\les \mathcal{E}_{1, 2}^{\frac{1}{2}}+M^{\epsilon}\mathcal{E}_{0, 2}^{\frac{2-\epsilon}{3}}.
\end{align*}
Hence taking supreme in terms of $t_0,$ $x_0$ and in view of the definition for $M$, we derive that
\begin{align*}
M\les \mathcal{E}_{1, 2}^{\frac{1}{2}}+M^{\epsilon}\mathcal{E}_{0, 2}^{\frac{2-\epsilon}{3}},
\end{align*}
from which we conclude that 
\begin{align*}
M\les \mathcal{E}_{1, 2}^{\frac{1}{2}}+\mathcal{E}_{0, 2}^{\frac{2-\epsilon}{3(1-\epsilon)}}.
\end{align*}
Here we note that $p=2$. This leads to the pointwise estimate for the solution for the case $p=2$
\begin{align*}
|\phi(t_0, x_0)|\les v_0^{-1}\mathcal{E}_{1, 2}^{\frac{1}{2}}+M^{\epsilon}\mathcal{E}_{0, 2}^{\frac{2-\epsilon}{3}} (u_0v_0)^{-\frac{1-2\epsilon}{3}}\les(u_0v_0)^{-\frac{1-2\epsilon}{3}}(\mathcal{E}_{1, 2}^{\frac{1}{2}}+\mathcal{E}_{0, 2}^{\frac{2-\epsilon}{3(1-\epsilon)}}).
\end{align*}
From the proof, we see that the implicit constant relies only on $p$ and $\epsilon$. We thus finished the proof for the main Theorem \ref{thm:main}.

\bibliography{shiwu}{}
\bibliographystyle{plain}

\bigskip
School of Mathematical Sciences, Peking University, Beijing, China

\textsl{Email}: {jnwdyi@pku.edu.cn}

Beijing International Center for Mathematical Research, Peking University,
Beijing, China

\textsl{Email}: shiwuyang@math.pku.edu.cn

\end{document}